\documentclass[11pt]{article}

\usepackage[a4paper,top=1in,bottom=1in,left=1in,right=1in]{geometry}
\usepackage{amsmath,amssymb,amsthm}
\usepackage{mathtools}
\usepackage{enumitem}
\usepackage{microtype}
\usepackage{hyperref}

\newtheorem{theorem}{Theorem}[section]
\newtheorem{proposition}[theorem]{Proposition}
\newtheorem{lemma}[theorem]{Lemma}

\theoremstyle{definition}

\theoremstyle{remark}
\newtheorem{remark}[theorem]{Remark}

\newcommand{\Ann}{\operatorname{Ann}}
\newcommand{\Hom}{\operatorname{Hom}}
\newcommand{\lt}{\ltimes}

\title{Ideal Structures in Idealizations and Cardinalities of Annihilating Ideals}

\author{R. Nikandish\\
	Department of Mathematics\\
	Jundi-Shapur University of Technology\\
	Dezful, Iran\\
	\texttt{r.nikandish@ipm.ir}}
\date{}

\begin{document}
	
	\maketitle
	
	\begin{abstract}
	We describe the ideals of an idealization $D\ltimes M$ by triples $(I,N,\varphi)$, where $I$ is an ideal of $D$, $N$ is a submodule of $M$ with $IM\subseteq N$, and $\varphi\in\operatorname{Hom}_D(I,M/N)$. For a domain $D$ and torsion-free $M$, the nonzero proper ideals of $D\ltimes M$ with nonzero annihilator are exactly $0\ltimes N$. As an application, for $R=\mathbb R[[t]]\ltimes\mathbb R[[t]]$, the nonzero proper annihilating ideals form a countable set, while the set of all nonzero proper ideals has cardinality $2^{\aleph_0}$. This answers negatively a question of Behboodi and Rakeei.
	\end{abstract}
	
	\noindent
	\textbf{Keywords:}
	idealization; ideals of rings; annihilating ideals; annihilating-ideal
	graphs; cardinality of ideals; torsion-free modules.
	
	\medskip
	
	\noindent
	\textbf{2020 Mathematics Subject Classification:}
	13A15; 13A70; 13C05.
	
\section{Introduction}

Let $R$ be a commutative ring with identity. We denote by
\[
\mathcal I^*(R)
=
\{I\lhd R:0\neq I\neq R\}
\]
the set of nonzero proper ideals of $R$, and by
\[
\mathcal A^*(R)
=
\{I\in\mathcal I^*(R):\operatorname{Ann}_R(I)\neq0\}
\]
the set of nonzero proper ideals having nonzero annihilator.

The set $\mathcal A^*(R)$ is the vertex set of the
annihilating-ideal graph of $R$. This graph was introduced and
studied by Behboodi and Rakeei in \cite{BehboodiRakeei}. Two distinct
vertices $I$ and $J$ are adjacent whenever
\[
IJ=0.
\]
Thus, the cardinality of the vertex set of the annihilating-ideal
graph is precisely the cardinality of the family of nonzero proper
ideals having nonzero annihilator.

A natural question is whether, for a commutative ring which is not a
domain, the number of such ideals is always equal to the number of
all nonzero proper ideals. Behboodi and Rakeei proposed the following
conjecture in \cite{BehboodiRakeei}.

\medskip

\noindent
\textbf{Behboodi--Rakeei Conjecture.}
\emph{If $R$ is a commutative ring with identity which is not a
	domain, then}
\[
\boxed{
	|\mathcal A^*(R)|=|\mathcal I^*(R)|.
}
\]

\medskip

This is Conjecture~1.5 in the paper of Behboodi and Rakeei. The
purpose of the present paper is to show that this conjectured equality
does not hold in general.

Our approach is based on the idealization of a module. If $D$ is a
commutative ring and $M$ is a $D$-module, the idealization
\[
D\ltimes M
\]
is the abelian group $D\oplus M$ equipped with multiplication
\[
(d,m)(d',m')
=
(dd',dm'+d'm).
\]
Idealizations provide a natural class of commutative rings with
nontrivial zero-divisor structure and have been studied extensively;
see, for example, Huckaba \cite{Huckaba} and Anderson and Winders
\cite{AndersonWinders}.

The structure of ideals in an idealization is particularly useful for
our purpose. The standard description of homogeneous ideals is given
in terms of pairs $(I,N)$ satisfying
\[
IM\subseteq N.
\]
However, an arbitrary ideal of $D\ltimes M$ need not be homogeneous.
The additional information carried by an arbitrary ideal is encoded
by a homomorphism
\[
\varphi:I\longrightarrow M/N.
\]
In Section~2, we give an explicit description of arbitrary ideals of
$D\ltimes M$ in terms of triples
\[
(I,N,\varphi),
\qquad
\varphi\in\operatorname{Hom}_D(I,M/N).
\]
This description will provide the main tool for counting ideals.

We then consider the case in which $D$ is a domain and $M$ is a
torsion-free $D$-module. In this setting, we show that an ideal of
$D\ltimes M$ whose projection onto $D$ is nonzero has zero
annihilator. Consequently, the nonzero proper ideals having nonzero
annihilator are precisely those of the form
\[
0\ltimes N,
\]
where $N$ is a nonzero $D$-submodule of $M$.

This observation leads to a general cardinality criterion. If the
number of homomorphisms
\[
\operatorname{Hom}_D(I,M/N)
\]
for a suitable pair $(I,N)$ is larger than the number of nonzero
submodules of $M$, then the idealization $D\ltimes M$ provides a
counterexample to the Behboodi--Rakeei conjecture.

As a concrete application, we take
\[
D=\mathbb R[[t]]
\qquad\text{and}\qquad
M=D.
\]
The ring $D$ is a discrete valuation domain and has only countably
many ideals. Hence the nonzero proper annihilating ideals of
\[
R=\mathbb R[[t]]\ltimes\mathbb R[[t]]
\]
form a countably infinite family. On the other hand, by taking
\[
I=tD
\qquad\text{and}\qquad
N=tD,
\]
we obtain
\[
\operatorname{Hom}_D(tD,D/tD)
\cong D/tD
\cong\mathbb R.
\]
Thus there are $2^{\aleph_0}$ distinct ideals of $R$ arising from the
different homomorphisms
\[
\varphi\in\operatorname{Hom}_D(tD,D/tD).
\]
We prove that this is in fact the exact cardinality of the set of all
nonzero proper ideals of $R$. Consequently,
\[
|\mathcal A^*(R)|=\aleph_0
<
2^{\aleph_0}
=
|\mathcal I^*(R)|.
\]

Therefore, the Behboodi--Rakeei conjecture fails even for the
indecomposable idealization
\[
\mathbb R[[t]]\ltimes\mathbb R[[t]].
\]

	\section{Ideals in Idealizations}
	
	We begin with the ideal structure of an idealization.
	
	Let $D$ be a commutative ring and let $M$ be a $D$-module. Recall that
	the idealization $D\ltimes M$ has underlying additive group
	$D\oplus M$ and multiplication
	\[
	(d,m)(d',m')=(dd',dm'+d'm).
	\]
	The subset
	\[
	0\ltimes M=\{(0,m):m\in M\}
	\]
	is a square-zero ideal.
	
	The following theorem gives a description of arbitrary ideals of an
	idealization. The familiar homogeneous ideals correspond to the
	special case in which the homomorphism appearing below is zero.
	
	\begin{theorem}
		\label{thm:classification}
		Let $D$ be a commutative ring, let $M$ be a $D$-module, and put
		\[
		R=D\ltimes M.
		\]
		For an ideal $J$ of $R$, define
		\[
		I=\pi(J)
		\]
		where $\pi:R\to D$ is the natural projection, and put
		\[
		N=\{m\in M:(0,m)\in J\}.
		\]
		Then $I$ is an ideal of $D$, $N$ is a $D$-submodule of $M$, and
		\[
		IM\subseteq N.
		\]
		Moreover, there is a unique homomorphism
		\[
		\varphi\in\Hom_D(I,M/N)
		\]
		such that
		\[
		J
		=
		\{(d,m)\in D\ltimes M:
		d\in I,\ m+N=\varphi(d)\}.
		\]
		Conversely, every triple $(I,N,\varphi)$ satisfying
		\[
		I\lhd D,\qquad N\leq_D M,\qquad IM\subseteq N,
		\qquad
		\varphi\in\Hom_D(I,M/N)
		\]
		determines an ideal
		\[
		J_\varphi
		=
		\{(d,m)\in D\ltimes M:
		d\in I,\ m+N=\varphi(d)\}.
		\]
		Furthermore,
		\[
		\pi(J_\varphi)=I.
		\]
		Thus the ideals of $D\ltimes M$ are in bijection with the triples
		$(I,N,\varphi)$ satisfying the conditions above.
	\end{theorem}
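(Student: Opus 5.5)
The plan is to argue directly from the ideal axioms, using two kinds of multiplication in $R$: by elements $(r,0)$ with $r\in D$, and by elements $(0,x)$ with $x\in M$.

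\textbf{$I$ and $N$.} Since $\pi$ is a surjective ring homomorphism, $I=\pi(J)$ is an ideal of $D$. Next, $N$ is an additive subgroup because $J$ is one. It is closed under scalars because $(r,0)(0,m)=(0,rm)$.

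\textbf{$IM\subseteq N$.} For $d\in I$, choose $(d,m_0)\in J$. For any $x\in M$,
\[
(0,x)(d,m_0)=(0,dx)\in J,
\]
so $dx\in N$. This is the one place where multiplication by $(0,x)$ is used, and it is what makes $M/N$ an $I$-annihilated target.

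\textbf{Defining $\varphi$.} For $d\in I$, pick any $m$ with $(d,m)\in J$ and set $\varphi(d)=m+N$. This is well defined: if $(d,m),(d,m')\in J$, then their difference $(0,m-m')$ lies in $J$, so $m-m'\in N$. Additivity follows from closure of $J$ under addition. For $D$-linearity, multiply $(d,m)$ by $(r,0)$ to get $(rd,rm)\in J$, hence $\varphi(rd)=rm+N=r\varphi(d)$. Then $J$ equals the displayed set:
\begin{itemize}
\item the inclusion $\subseteq$ holds by construction;
\item for $\supseteq$, if $d\in I$ and $m+N=\varphi(d)$, take $(d,m_0)\in J$; then $m-m_0\in N$, so $(d,m)=(d,m_0)+(0,m-m_0)\in J$.
\end{itemize}
Uniqueness of $\varphi$ is immediate, since the set determines $\varphi(d)$ as the coset of any $m$ with $(d,m)\in J$.

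\textbf{Converse.} Given a triple $(I,N,\varphi)$, the set $J_\varphi$ is an additive subgroup because $\varphi$ is additive. For closure under multiplication, take $(r,x)\in R$ and $(d,m)\in J_\varphi$; the product is $(rd,rm+dx)$. Here $rd\in I$, and $dx\in IM\subseteq N$, so
\[
(rd,rm+dx)+\ldots\quad\text{has second coordinate with coset}\quad rm+N=r\varphi(d)=\varphi(rd).
\]
Thus the product lies in $J_\varphi$. This step is the main point where the hypothesis $IM\subseteq N$ is essential. Finally, $\pi(J_\varphi)=I$ because every $\varphi(d)$ has a representative $m$.

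\textbf{Bijection.} To finish, check that the two constructions are mutually inverse. Starting from $(I,N,\varphi)$, we have already seen that $\pi(J_\varphi)=I$. The $N$ recovered from $J_\varphi$ is $\{m:m\in N\}=N$, since $\varphi(0)=0$. The recovered homomorphism is $\varphi$ itself, by the uniqueness above. In the other direction, the first part shows $J=J_\varphi$ for the triple extracted from $J$.

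I expect no serious obstacle. The only delicate points are well-definedness of $\varphi$ and the use of $IM\subseteq N$ in the converse, which makes the ideal condition hold against elements of $0\ltimes M$.
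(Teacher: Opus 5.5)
Your proof is correct and follows the same approach as the paper. You extract $I$ and $N$, use multiplication by $(0,x)$ to get $IM\subseteq N$, define $\varphi$ via cosets, and use $IM\subseteq N$ to absorb the $dx$ term in the converse. You are also slightly more complete than the paper: you check additivity of $\varphi$ and verify that the two constructions are mutually inverse (recovering $N$ from $J_\varphi$ via $\varphi(0)=0$), both of which the paper leaves implicit.
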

	
	\begin{proof}
		Let $J$ be an ideal of $R$, and let
		\[
		I=\pi(J).
		\]
		Since $\pi$ is a ring homomorphism, $I$ is an ideal of $D$.
		
		Next define
		\[
		N=\{m\in M:(0,m)\in J\}.
		\]
		It is immediate that $N$ is an additive subgroup of $M$. If
		$a\in D$ and $m\in N$, then
		\[
		(a,0)(0,m)=(0,am)\in J,
		\]
		so $am\in N$. Hence $N$ is a $D$-submodule of $M$.
		
		We claim that
		\[
		IM\subseteq N.
		\]
		Let $d\in I$ and $x\in M$. Since $d\in\pi(J)$, there is some
		$m\in M$ such that $(d,m)\in J$. Therefore
		\[
		(0,x)(d,m)=(0,dx)\in J.
		\]
		Hence $dx\in N$, proving that $IM\subseteq N$.
		
		For each $d\in I$, choose $m\in M$ such that $(d,m)\in J$, and
		define
		\[
		\varphi(d)=m+N\in M/N.
		\]
		This is well-defined. Indeed, if both $(d,m)$ and $(d,m')$ belong to
		$J$, then
		\[
		(0,m-m')=(d,m)-(d,m')\in J,
		\]
		and consequently $m-m'\in N$.
		
		The map $\varphi$ is $D$-linear. For $a\in D$ and $d\in I$, if
		$(d,m)\in J$, then
		\[
		(a,0)(d,m)=(ad,am)\in J.
		\]
		Thus
		\[
		\varphi(ad)=am+N=a(m+N)=a\varphi(d).
		\]
		
		We now prove that
		\[
		J
		=
		\{(d,m)\in D\ltimes M:
		d\in I,\ m+N=\varphi(d)\}.
		\]
		If $(d,m)\in J$, then $d\in I$ and, by definition,
		\[
		m+N=\varphi(d).
		\]
		Conversely, suppose $d\in I$ and
		\[
		m+N=\varphi(d).
		\]
		By the definition of $\varphi$, there is some $m_0\in M$ such that
		\[
		(d,m_0)\in J
		\qquad\text{and}\qquad
		m_0+N=\varphi(d).
		\]
		Hence $m-m_0\in N$, so
		\[
		(0,m-m_0)\in J.
		\]
		Therefore
		\[
		(d,m)=(d,m_0)+(0,m-m_0)\in J.
		\]
		
		For the converse construction, suppose that $I$, $N$, and $\varphi$
		satisfy the stated conditions. Let
		\[
		J_\varphi
		=
		\{(d,m):d\in I,\ m+N=\varphi(d)\}.
		\]
		It is clearly an additive subgroup of $D\ltimes M$. To prove that it
		is an ideal, let
		\[
		(d,m)\in J_\varphi
		\qquad\text{and}\qquad
		(a,x)\in D\ltimes M.
		\]
		Then
		\[
		(d,m)(a,x)=(ad,am+dx).
		\]
		Since $I$ is an ideal, $ad\in I$. Moreover,
		\[
		dx\in IM\subseteq N.
		\]
		Consequently,
		\[
		am+dx+N=am+N.
		\]
		Since $\varphi$ is $D$-linear,
		\[
		\varphi(ad)=a\varphi(d)=a(m+N)=am+N.
		\]
		Here the first component in the product is $ad$, while the second
		component is $am+dx$ when the factors are written in the order
		$(a,x)(d,m)$. Thus, using commutativity of $D$, we obtain
		\[
		am+dx+N=\varphi(ad),
		\]
		and hence
		\[
		(a,x)(d,m)\in J_\varphi.
		\]
		Therefore $J_\varphi$ is an ideal.
		
		Finally, for every $d\in I$, the definition of $J_\varphi$ provides
		an element of $J_\varphi$ whose first component is $d$. Hence
		\[
		\pi(J_\varphi)=I.
		\]
		The uniqueness of $\varphi$ follows immediately from the equality
		\[
		\varphi(d)=m+N
		\]
		for any $(d,m)\in J$. This completes the proof.
	\end{proof}

	\begin{remark}
		\label{rem:literature}
		The idealization construction and the structure of its homogeneous
		ideals have been studied extensively. In particular, Anderson and
		Winders~\cite{AndersonWinders}, Theorem~3.1, characterize the ideals
		of the form
		\[
		I\ltimes N
		\]
		and show that such a set is an ideal precisely when
		\[
		IM\subseteq N.
		\]
		Their paper also discusses the distinction between homogeneous and
		arbitrary ideals of an idealization. See also Huckaba~\cite{Huckaba},
		Section~25, for the classical treatment of idealization.
		
		The parametrization in Theorem~\ref{thm:classification} retains the
		additional homomorphism
		\[
		\varphi\in\Hom_D(I,M/N),
		\]
		which is essential for the cardinality argument developed below.
	\end{remark}

	We next identify the ideals with nonzero annihilator in an important
	class of idealizations.
	
	\begin{proposition}
		\label{prop:ann}
		Let $D$ be a domain and let $M$ be a torsion-free $D$-module. If $J$
		is an ideal of
		\[
		R=D\ltimes M
		\]
		whose projection onto $D$ is nonzero, then
		\[
		\Ann_R(J)=0.
		\]
		Consequently,
		\[
		\mathcal A^*(R)
		=
		\{0\ltimes N:0\neq N\leq_D M\}.
		\]
	\end{proposition}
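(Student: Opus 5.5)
We prove the annihilator statement by direct computation and then use Theorem~\ref{thm:classification} to identify $\mathcal A^*(R)$.

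\textbf{Zero annihilator.} Suppose $J$ is an ideal with $\pi(J)\neq 0$, and pick $(d,m)\in J$ with $d\neq 0$. Let $(a,x)\in\Ann_R(J)$.
\begin{itemize}
\item Since $J$ is an ideal, $(0,y)(d,m)=(0,dy)\in J$ for every $y\in M$.
\item Annihilating $(d,m)$ gives $(ad,\,am+dx)=0$. As $D$ is a domain and $d\neq0$, we get $a=0$.
\item The second component then reduces to $dx=0$. Torsion-freeness of $M$ forces $x=0$.
\end{itemize}
Hence $\Ann_R(J)=0$. This part is only the observation that one element with nonzero first coordinate already kills the annihilator; the elements $(0,dy)$ are not even needed.

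\textbf{Identifying $\mathcal A^*(R)$.} Let $J\in\mathcal A^*(R)$. By the first part, $\pi(J)=0$. In the notation of Theorem~\ref{thm:classification} this means $I=0$, so $\varphi=0$ and $J=0\ltimes N$. Since $J\neq0$, we have $N\neq0$.

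Conversely, take any nonzero submodule $N\leq_D M$. Then:
\begin{itemize}
\item $0\ltimes N$ is an ideal by Theorem~\ref{thm:classification} applied to the triple $(0,N,0)$.
\item It is nonzero.
\item It is proper, because it does not contain $(1,0)$.
\item Its annihilator contains $0\ltimes M$, since $(0,x)(0,n)=0$. So the annihilator is nonzero as long as $M\neq0$, which holds because $N\neq0$.
\end{itemize}
Therefore $0\ltimes N\in\mathcal A^*(R)$.

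Distinct submodules $N$ give distinct ideals $0\ltimes N$, so the two sets coincide.

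Nothing here is a genuine obstacle. The only point requiring care is that the cancellation argument uses both hypotheses, in order: the domain property of $D$ to get $a=0$, and then torsion-freeness of $M$ to get $x=0$.
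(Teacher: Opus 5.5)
Your proof is correct and follows the paper's argument. A single element $(d,m)\in J$ with $d\neq0$ forces $a=0$ by the domain property and then $x=0$ by torsion-freeness, while ideals with zero projection are exactly the $0\ltimes N$, whose annihilator contains $0\ltimes M$; your extra checks (properness of $0\ltimes N$, the fact that $M\neq0$, and the unused elements $(0,dy)$) are harmless additions that the paper leaves implicit or omits.
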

	
	\begin{proof}
		Suppose that the projection of $J$ onto $D$ is nonzero. Then there
		exists
		\[
		(d,m)\in J
		\]
		with $d\neq0$. Let
		\[
		(a,x)\in\Ann_R(J).
		\]
		Since $(a,x)$ annihilates $(d,m)$, we have
		\[
		(a,x)(d,m)=(ad,am+dx)=(0,0).
		\]
		The first equality gives
		\[
		ad=0.
		\]
		Since $D$ is a domain and $d\neq0$, it follows that $a=0$. The second
		equality then becomes
		\[
		dx=0.
		\]
		Because $M$ is torsion-free and $d\neq0$, we obtain $x=0$. Hence
		\[
		\Ann_R(J)=0.
		\]
		
		It remains to consider ideals whose projection onto $D$ is zero. Such
		an ideal has the form
		\[
		J=0\ltimes N
		\]
		for some nonzero $D$-submodule $N$ of $M$. For every $x\in M$ and
		$n\in N$,
		\[
		(0,x)(0,n)=(0,0).
		\]
		Therefore
		\[
		0\ltimes M\subseteq\Ann_R(0\ltimes N),
		\]
		and hence $0\ltimes N$ has nonzero annihilator. The assertion follows.
	\end{proof}

	The preceding proposition gives a useful general criterion for
	producing a discrepancy between the cardinalities of all nonzero
	proper ideals and annihilating ideals.
	
	\begin{proposition}
		\label{prop:criterion}
		Let $D$ be a domain and let $M$ be a torsion-free $D$-module. Put
		\[
		\kappa=
		\left|
		\{0\neq N\leq_D M\}
		\right|.
		\]
		Suppose that there exist an ideal $I$ of $D$ and a submodule $N$ of
		$M$ such that
		\[
		0\neq I\neq D,
		\qquad
		IM\subseteq N,
		\]
		and
		\[
		|\Hom_D(I,M/N)|>\kappa.
		\]
		Then
		\[
		|\mathcal A^*(D\ltimes M)|
		<
		|\mathcal I^*(D\ltimes M)|.
		\]
	\end{proposition}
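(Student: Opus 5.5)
The plan is to compare both sides with $\kappa$. By Proposition~\ref{prop:ann}, $\mathcal A^*(D\ltimes M)=\{0\ltimes N':0\neq N'\leq_D M\}$. The assignment $N'\mapsto 0\ltimes N'$ is injective, so $|\mathcal A^*(D\ltimes M)|=\kappa$. It therefore suffices to exhibit more than $\kappa$ distinct nonzero proper ideals of $D\ltimes M$.

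For this I will use the converse half of Theorem~\ref{thm:classification} with the fixed pair $(I,N)$ from the hypothesis. Each $\varphi\in\Hom_D(I,M/N)$ gives an ideal $J_\varphi$. The assignment $\varphi\mapsto J_\varphi$ is injective, because $\varphi$ is recovered from $J_\varphi$: for $d\in I$, $\varphi(d)=m+N$ for any $(d,m)\in J_\varphi$. This is exactly the uniqueness in Theorem~\ref{thm:classification}, since $\pi(J_\varphi)=I$ and $\{m:(0,m)\in J_\varphi\}=N$ (the latter because $\varphi(0)=0$). Hence there are at least $|\Hom_D(I,M/N)|>\kappa$ distinct ideals $J_\varphi$.

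Next I check that every $J_\varphi$ lies in $\mathcal I^*(D\ltimes M)$. It is nonzero, since $\pi(J_\varphi)=I\neq0$. It is proper, since $\pi(J_\varphi)=I\neq D$, so $(1,0)\notin J_\varphi$. Therefore
\[
|\mathcal I^*(D\ltimes M)|\ \ge\ |\Hom_D(I,M/N)|\ >\ \kappa\ =\ |\mathcal A^*(D\ltimes M)|,
\]
which gives the strict inequality. This step uses only the comparison of cardinals $\kappa<|\Hom_D(I,M/N)|\le|\mathcal I^*(D\ltimes M)|$, so no choice subtleties arise beyond the standard ones.

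There is no real obstacle. The only care points are the injectivity of $\varphi\mapsto J_\varphi$ and the verification that $J_\varphi$ is proper, both handled above. The hypotheses $0\neq I\neq D$ are needed only for that properness and nonvanishing, and $IM\subseteq N$ is needed so that Theorem~\ref{thm:classification} applies.
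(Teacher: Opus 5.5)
Your proposal is correct and follows essentially the same route as the paper. You identify $|\mathcal A^*(D\ltimes M)|=\kappa$ via Proposition~\ref{prop:ann}, then inject $\Hom_D(I,M/N)$ into $\mathcal I^*(D\ltimes M)$ via $\varphi\mapsto J_\varphi$ using the uniqueness in Theorem~\ref{thm:classification}. Your explicit checks that $\{m:(0,m)\in J_\varphi\}=N$ and that $(1,0)\notin J_\varphi$ fill in small details the paper leaves implicit.
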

	
	\begin{proof}
		By Proposition~\ref{prop:ann},
		\[
		\mathcal A^*(D\ltimes M)
		=
		\{0\ltimes N':0\neq N'\leq_D M\}.
		\]
		Thus
		\[
		|\mathcal A^*(D\ltimes M)|=\kappa.
		\]
		
		Now fix $I$ and $N$ as in the hypothesis. For every
		\[
		\varphi\in\Hom_D(I,M/N),
		\]
		Theorem~\ref{thm:classification} gives an ideal
		\[
		J_\varphi
		=
		\{(d,m):d\in I,\ m+N=\varphi(d)\}.
		\]
		Since
		\[
		\pi(J_\varphi)=I
		\]
		and $0\neq I\neq D$, each $J_\varphi$ is a nonzero proper ideal of
		$D\ltimes M$. Moreover, if
		\[
		\varphi\neq\psi,
		\]
		then
		\[
		J_\varphi\neq J_\psi,
		\]
		because the homomorphism associated with an ideal in
		Theorem~\ref{thm:classification} is unique. Hence
		\[
		|\mathcal I^*(D\ltimes M)|
		\geq
		|\Hom_D(I,M/N)|
		>
		\kappa.
		\]
		Therefore
		\[
		|\mathcal A^*(D\ltimes M)|
		<
		|\mathcal I^*(D\ltimes M)|.
		\]
	\end{proof}

	\section{A Cardinality Counterexample}
	
	We now apply the preceding criterion to a concrete idealization.
	
	Let
	\[
	D=\mathbb R[[t]]
	\]
	and consider
	\[
	R=D\ltimes D.
	\]
	Thus the elements of $R$ are pairs $(f,g)$ with
	$f,g\in\mathbb R[[t]]$, and multiplication is given by
	\[
	(f,g)(h,k)
	=
	(fh,fk+hg).
	\]
	
	The ring $D=\mathbb R[[t]]$ is a discrete valuation domain. Its ideals
	are precisely
	\[
	0,\quad tD,\quad t^2D,\quad t^3D,\quad\ldots,\quad D.
	\]
	In particular, $D$ has only countably many ideals.
	
	We first record the cardinality of its finite-length quotients.
	
	\begin{lemma}
		\label{lem:quotients}
		For every positive integer $s$,
		\[
		D/t^sD\cong\mathbb R^s
		\]
		as $\mathbb R$-vector spaces. Consequently,
		\[
		|D/t^sD|=2^{\aleph_0}.
		\]
		Moreover,
		\[
		|D|=2^{\aleph_0}.
		\]
	\end{lemma}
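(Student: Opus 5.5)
The plan is to work with truncation of power series. Every element of $D=\mathbb R[[t]]$ is a formal series $\sum_{i\ge0}a_it^i$ with $a_i\in\mathbb R$, and the assignment
\[
f=\sum_{i\ge0}a_it^i\;\longmapsto\;(a_i)_{i\ge0}
\]
is a bijection $D\to\mathbb R^{\mathbb N}$ that is $\mathbb R$-linear. Under this bijection $t^sD$ corresponds to the sequences with $a_0=\dots=a_{s-1}=0$.

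\textbf{Step 1.} Consider the $\mathbb R$-linear map $\tau_s:D\to\mathbb R^s$ given by $f\mapsto(a_0,\dots,a_{s-1})$. It is surjective, since polynomials of degree $<s$ lie in $D$. Its kernel is exactly $t^sD$: a series with vanishing first $s$ coefficients equals $t^s\cdot\sum_{i\ge s}a_it^{i-s}$, and conversely every element of $t^sD$ has vanishing first $s$ coefficients. The first isomorphism theorem for $\mathbb R$-vector spaces then gives $D/t^sD\cong\mathbb R^s$. Here $D/t^sD$ is an $\mathbb R$-vector space because $\mathbb R\subseteq D$ and $t^sD$ is an $\mathbb R$-subspace.

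\textbf{Step 2.} The cardinality statements follow from standard cardinal arithmetic. For $s\ge1$,
\[
|\mathbb R^s|=(2^{\aleph_0})^s=2^{\aleph_0 s}=2^{\aleph_0}.
\]
For $D$ itself,
\[
|D|=|\mathbb R^{\mathbb N}|=(2^{\aleph_0})^{\aleph_0}=2^{\aleph_0\cdot\aleph_0}=2^{\aleph_0}.
\]
As a sanity check, the lower bound $|D|\ge|D/tD|=2^{\aleph_0}$ is also immediate from Step 1 with $s=1$.

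\textbf{Main obstacle.} There is no real obstacle here. The only point requiring care is verifying that the kernel of $\tau_s$ is precisely the ideal $t^sD$, that is, that divisibility by $t^s$ in $\mathbb R[[t]]$ is equivalent to the vanishing of the first $s$ coefficients. This is the factorization displayed in Step 1. Everything else is the identity $\kappa^{\aleph_0}=\kappa$ for $\kappa=2^{\aleph_0}$.
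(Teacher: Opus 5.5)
Your proof is correct and follows essentially the same route as the paper. The paper takes unique representatives $a_0+a_1t+\cdots+a_{s-1}t^{s-1}$ for the classes in $D/t^sD$, which is your truncation map $\tau_s$ phrased via the first isomorphism theorem, and it obtains $|D|$ from the bijection with $\mathbb R^{\mathbb N}$ together with $(2^{\aleph_0})^{\aleph_0}=2^{\aleph_0}$.
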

	
	\begin{proof}
		Every element of $D/t^sD$ has a unique representative of the form
		\[
		a_0+a_1t+\cdots+a_{s-1}t^{s-1},
		\qquad a_i\in\mathbb R.
		\]
		Hence
		\[
		D/t^sD\cong\mathbb R^s
		\]
		as $\mathbb R$-vector spaces. Since $\mathbb R$ has cardinality
		$2^{\aleph_0}$ and $s$ is finite,
		\[
		|D/t^sD|=|\mathbb R^s|=2^{\aleph_0}.
		\]
		
		Finally,
		\[
		D=\mathbb R[[t]]
		\]
		is in bijection with the set of all sequences of real numbers, so
		\[
		|D|
		=
		|\mathbb R^{\mathbb N}|
		=
		(2^{\aleph_0})^{\aleph_0}
		=
		2^{\aleph_0}.
		\]
	\end{proof}

	\subsection{The annihilating ideals}
	
	Since $D$ is a domain and is torsion-free as a module over itself,
	Proposition~\ref{prop:ann} applies immediately.
	
	\begin{proposition}
		\label{prop:ann-example}
		For
		\[
		R=\mathbb R[[t]]\lt\mathbb R[[t]],
		\]
		the nonzero proper ideals having nonzero annihilator are precisely
		\[
		0\ltimes t^nD,
		\qquad n\geq0.
		\]
		Consequently,
		\[
		|\mathcal A^*(R)|=\aleph_0.
		\]
	\end{proposition}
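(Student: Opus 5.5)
The plan is to apply Proposition~\ref{prop:ann} with $M=D=\mathbb R[[t]]$ and then list the nonzero submodules of $D$.

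First check the hypotheses. $D$ is a domain, and $D$ is torsion-free as a module over itself: if $dx=0$ with $d\neq0$, then $x=0$. So Proposition~\ref{prop:ann} gives
\[
\mathcal A^*(R)=\{0\ltimes N:0\neq N\leq_D D\}.
\]
The $D$-submodules of $D$ are exactly the ideals of $D$. Since $D$ is a discrete valuation domain, its nonzero ideals are the $t^nD$ with $n\geq0$. This yields the stated list $0\ltimes t^nD$.

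The case $n=0$ deserves one check. The ideal $0\ltimes D$ is proper, because $(1,0)\notin 0\ltimes D$. It is nonzero, and it is annihilated by $0\ltimes D$. So the list is correct as stated, including $n=0$. Note that Proposition~\ref{prop:ann} already requires its members to be proper, and every ideal with zero projection to $D$ is proper.

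For the cardinality, I will show that the map $n\mapsto 0\ltimes t^nD$ is injective, since the $t^nD$ are pairwise distinct: $t^n\in t^nD\setminus t^{n+1}D$ by comparing $t$-adic orders. This gives a bijection between $\mathbb N$ and $\mathcal A^*(R)$, so $|\mathcal A^*(R)|=\aleph_0$.

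No step is a real obstacle. The only point needing a reference is the classification of ideals of $\mathbb R[[t]]$. It is the standard fact that every nonzero $f$ factors as $t^{v(f)}u$ with $u$ a unit, since a power series with nonzero constant term is invertible. It follows that an ideal equals $t^nD$, where $n$ is the minimal order of its elements.
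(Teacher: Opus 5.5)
Your proposal is correct and follows the paper's proof: you apply Proposition~\ref{prop:ann} with $M=D$ and then use that the nonzero ideals of the discrete valuation domain $\mathbb R[[t]]$ are exactly the $t^nD$. Your additional checks make explicit details the paper leaves implicit. These are that $0\ltimes D$ is proper, that the $t^nD$ are pairwise distinct by $t$-adic order, and the factorization $f=t^{v(f)}u$ behind the classification of ideals.
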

	
	\begin{proof}
		The nonzero ideals of $D$ are
		\[
		t^nD,
		\qquad n\geq0.
		\]
		By Proposition~\ref{prop:ann}, the nonzero proper ideals of $R$ with
		nonzero annihilator are exactly
		\[
		0\ltimes N
		\]
		with $N$ a nonzero ideal of $D$. Hence they are precisely
		\[
		0\ltimes t^nD,
		\qquad n\geq0.
		\]
		There are countably infinitely many such ideals.
	\end{proof}

	\subsection{A continuum of nonzero proper ideals}
	
	We now show that the full ideal lattice is much larger.
	
	Take
	\[
	I=tD
	\qquad\text{and}\qquad
	N=tD.
	\]
	Since $M=D$, we have
	\[
	IM=tD=N,
	\]
	so the conditions of Theorem~\ref{thm:classification} are satisfied.
	
	Moreover,
	\[
	tD\cong D
	\]
	as a $D$-module. Therefore
	\[
	\Hom_D(tD,D/tD)
	\cong
	\Hom_D(D,D/tD)
	\cong
	D/tD.
	\]
	Since
	\[
	D/tD\cong\mathbb R,
	\]
	we obtain
	\[
	|\Hom_D(tD,D/tD)|
	=
	2^{\aleph_0}.
	\]
	
	For every
	\[
	\varphi\in\Hom_D(tD,D/tD),
	\]
	define
	\[
	J_\varphi
	=
	\{(d,m)\in D\lt D:
	d\in tD,\ m+tD=\varphi(d)\}.
	\]
	By Theorem~\ref{thm:classification}, each $J_\varphi$ is an ideal of
	$R$. Furthermore,
	\[
	\pi(J_\varphi)=tD.
	\]
	Since
	\[
	0\neq tD\neq D,
	\]
	each $J_\varphi$ is a nonzero proper ideal of $R$.
	
	If
	\[
	\varphi\neq\psi,
	\]
	then
	\[
	J_\varphi\neq J_\psi
	\]
	by the uniqueness assertion in Theorem~\ref{thm:classification}.
	Consequently,
	\[
	|\mathcal I^*(R)|
	\geq
	2^{\aleph_0}.
	\]
	
	Thus the family of nonzero proper ideals is already uncountable, even
	though the family of nonzero proper annihilating ideals is countable.

	\subsection{The upper bound}
	
	For completeness, we establish that the total number of ideals of
	$R$ is at most continuum. This will give the exact cardinality of the
	ideal lattice.
	
	Let
	\[
	J
	\]
	be an ideal of $R=D\lt D$. By Theorem~\ref{thm:classification}, $J$
	is determined by a triple
	\[
	(I,N,\varphi)
	\]
	where $I$ and $N$ are ideals of $D$ satisfying
	\[
	I\subseteq N
	\]
	and
	\[
	\varphi\in\Hom_D(I,D/N).
	\]
	There are only countably many possibilities for $I$ and $N$.
	
	We examine the possible cardinalities of the homomorphism sets.
	
	If
	\[
	N=D,
	\]
	then
	\[
	D/N=0,
	\]
	so
	\[
	|\Hom_D(I,D/N)|=1.
	\]
	
	If
	\[
	N=0,
	\]
	the condition $I\subseteq N$ forces
	\[
	I=0,
	\]
	and again there is only one homomorphism.
	
	Finally, suppose
	\[
	N=t^sD
	\qquad(s\geq1).
	\]
	If $I\neq0$, then
	\[
	I=t^rD
	\]
	for some $r\geq0$, and
	\[
	I\cong D
	\]
	as a $D$-module. Hence
	\[
	\Hom_D(I,D/N)
	\cong
	D/N.
	\]
	By Lemma~\ref{lem:quotients},
	\[
	|D/N|=2^{\aleph_0}.
	\]
	Thus, for every admissible pair $(I,N)$,
	\[
	|\Hom_D(I,D/N)|\leq2^{\aleph_0}.
	\]
	
	Since there are only countably many admissible pairs $(I,N)$, the
	classification theorem gives
	\[
	|\operatorname{Id}(R)|
	\leq
	\aleph_0\cdot2^{\aleph_0}
	=
	2^{\aleph_0},
	\]
	where $\operatorname{Id}(R)$ denotes the set of all ideals of $R$.
	
	Combining this with the lower bound obtained above gives the main
	result.
	
	\begin{theorem}
		\label{thm:main}
		Let
		\[
		R=\mathbb R[[t]]\lt\mathbb R[[t]].
		\]
		Then
		\[
		|\mathcal A^*(R)|=\aleph_0
		\]
		and
		\[
		|\mathcal I^*(R)|=2^{\aleph_0}.
		\]
		In particular,
		\[
		|\mathcal A^*(R)|
		<
		|\mathcal I^*(R)|.
		\]
		Consequently, the Behboodi--Rakeei conjecture does not hold in
		general.
	\end{theorem}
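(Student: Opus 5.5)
The plan is to assemble Theorem~\ref{thm:main} from the pieces already established, since each of its assertions has essentially been proved in the preceding subsections. The first equality, $|\mathcal A^*(R)|=\aleph_0$, is exactly Proposition~\ref{prop:ann-example}. That proposition rests on Proposition~\ref{prop:ann}, applied with $D=\mathbb R[[t]]$ a domain and $M=D$ torsion-free over itself, together with the fact that the nonzero ideals of the discrete valuation ring $D$ are the $t^nD$, $n\ge 0$. These form a countably infinite, pairwise distinct family, so the corresponding ideals $0\ltimes t^nD$ are pairwise distinct. Each is proper because its projection to $D$ is zero.

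For $|\mathcal I^*(R)|=2^{\aleph_0}$, I will combine the two bounds.

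\emph{Lower bound.} Take $I=N=tD$. Theorem~\ref{thm:classification} and its uniqueness clause give an injection $\varphi\mapsto J_\varphi$ from $\Hom_D(tD,D/tD)\cong D/tD\cong\mathbb R$ into $\mathcal I^*(R)$. Each $J_\varphi$ is nonzero and proper because $\pi(J_\varphi)=tD$, which is neither $0$ nor $D$. Alternatively, one may invoke Proposition~\ref{prop:criterion} directly, using $\kappa=\aleph_0<2^{\aleph_0}$.

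\emph{Upper bound.} Use the bijection of Theorem~\ref{thm:classification} between ideals and triples $(I,N,\varphi)$. There are countably many admissible pairs $(I,N)$, and for each pair $|\Hom_D(I,D/N)|\le 2^{\aleph_0}$ by the case analysis above together with Lemma~\ref{lem:quotients}. Hence $|\operatorname{Id}(R)|\le\aleph_0\cdot 2^{\aleph_0}=2^{\aleph_0}$. Since $\mathcal I^*(R)\subseteq\operatorname{Id}(R)$, Cantor--Schröder--Bernstein then gives equality.

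The strict inequality $\aleph_0<2^{\aleph_0}$ is Cantor's theorem. The failure of the conjecture also requires that $R$ is not a domain, and this is immediate: $(0,1)^2=(0,0)$ with $(0,1)\neq 0$. So $R$ satisfies the hypothesis of the Behboodi--Rakeei conjecture but not its conclusion.

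The only point needing care is the upper-bound case analysis. In the case $N=t^sD$ with $I=t^rD$, I would double-check that the isomorphism $I\cong D$ (multiplication by $t^r$) does give $\Hom_D(I,D/N)\cong D/N$ regardless of how $r$ and $s$ compare; it does, since $I$ is free of rank one. The remaining cases are $I=0$ (a single homomorphism) and $N\in\{0,D\}$ (a single homomorphism each). Beyond this the proof is bookkeeping.
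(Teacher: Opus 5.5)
Your proposal is correct and follows the paper's own proof: $|\mathcal A^*(R)|=\aleph_0$ from Proposition~\ref{prop:ann-example}, the lower bound from the family $J_\varphi$ with $I=N=tD$ together with the uniqueness clause of Theorem~\ref{thm:classification}, and the upper bound $\aleph_0\cdot 2^{\aleph_0}=2^{\aleph_0}$ from counting triples $(I,N,\varphi)$. You also check explicitly that $R$ is not a domain, since $(0,1)^2=(0,0)$; this is a worthwhile addition, because the paper leaves this hypothesis of the conjecture implicit.
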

	
	\begin{proof}
		The equality
		\[
		|\mathcal A^*(R)|=\aleph_0
		\]
		follows from Proposition~\ref{prop:ann-example}.
		
		The construction based on
		\[
		I=N=tD
		\]
		gives
		\[
		|\mathcal I^*(R)|\geq2^{\aleph_0}.
		\]
		On the other hand, the classification in
		Theorem~\ref{thm:classification} and the preceding counting argument
		give
		\[
		|\mathcal I^*(R)|\leq2^{\aleph_0}.
		\]
		Hence
		\[
		|\mathcal I^*(R)|=2^{\aleph_0}.
		\]
		The strict inequality follows immediately.
	\end{proof}

	\section{Further Remarks}
	
	The example above illustrates a general phenomenon in idealizations.
	When $D$ is a domain and $M$ is torsion-free, the annihilating ideals
	are controlled entirely by the submodules of $M$ occurring in ideals
	of the form
	\[
	0\ltimes N.
	\]
	By contrast, arbitrary ideals with nonzero projection onto $D$ can
	carry an additional parameter
	\[
	\varphi\in\Hom_D(I,M/N).
	\]
	If this Hom-set is sufficiently large, it can produce many more
	ideals than are available among the annihilating ideals.
	
	This mechanism is not specific to $\mathbb R[[t]]$. The role of the
	coefficient field in the present example is to make the quotient
	\[
	D/tD
	\]
	uncountable while keeping the ideal lattice of $D$ countable. More
	generally, a similar construction can be considered for a discrete
	valuation domain whose residue field has sufficiently large
	cardinality. The comparison between the cardinality of the residue
	field and the number of nonzero submodules of the chosen module then
	determines whether the same argument applies.
	
	It is also worth noting that the ring
	\[
	R=\mathbb R[[t]]\lt\mathbb R[[t]]
	\]
	is indecomposable. Indeed, a decomposition of a commutative ring as a
	nontrivial direct product is equivalent to the existence of a
	nontrivial idempotent. Suppose that
	\[
	(a,m)^2=(a,m).
	\]
	Then
	\[
	a^2=a
	\qquad\text{and}\qquad
	2am=m.
	\]
	Since $D=\mathbb R[[t]]$ is a local domain, its only idempotents are
	$0$ and $1$. If $a=0$, then $m=0$; if $a=1$, then
	\[
	2m=m,
	\]
	and hence again $m=0$. Thus the only idempotents of $R$ are
	\[
	(0,0)
	\qquad\text{and}\qquad
	(1,0).
	\]
	Therefore $R$ admits no nontrivial direct product decomposition.
	
	The example consequently shows that the failure of the
	Behboodi--Rakeei conjecture is not merely a consequence of taking a
	decomposable ring. The discrepancy is produced by the structure of
	arbitrary ideals in the idealization itself.

	\section{Concluding Remarks and Questions}
	
	The cardinality comparison considered in this paper suggests that
	the family of annihilating ideals and the full ideal lattice can
	behave quite differently, even for relatively well-controlled
	commutative rings. The idealization construction provides a
	particularly transparent setting in which this difference can be
	measured explicitly.
	
	The main point is that, for a domain $D$ and a torsion-free
	$D$-module $M$, the nonzero proper annihilating ideals of
	$D\ltimes M$ are exactly the ideals
	\[
	0\ltimes N.
	\]
	The remaining ideals may be parametrized by homomorphisms
	\[
	I\longrightarrow M/N.
	\]
	Thus the size of the Hom-sets
	\[
	\Hom_D(I,M/N)
	\]
	can be substantially larger than the number of nonzero submodules of
	$M$.
	
	For the particular ring
	\[
	\mathbb R[[t]]\lt\mathbb R[[t]],
	\]
	this produces the sharp cardinality comparison
	\[
	|\mathcal A^*(R)|=\aleph_0
	<
	2^{\aleph_0}
	=
	|\mathcal I^*(R)|.
	\]
	In particular, the Behboodi--Rakeei conjecture fails without any need
	to impose a decomposability assumption on the ring.
	
	Several related questions remain natural. One may ask for conditions
	on a domain $D$ and a torsion-free module $M$ under which
	\[
	|\mathcal A^*(D\lt M)|
	=
	|\mathcal I^*(D\lt M)|.
	\]
	Another question is to determine the possible pairs of cardinals
	\[
	\bigl(
	|\mathcal A^*(D\lt M)|,
	|\mathcal I^*(D\lt M)|
	\bigr)
	\]
	that can occur for idealizations. The parametrization in
	Theorem~\ref{thm:classification} suggests that these questions are
	closely connected with the cardinalities of the Hom-sets
	\[
	\Hom_D(I,M/N).
	\]
	A systematic study of these cardinal invariants may provide further
	examples in which the ideal lattice is substantially larger than the
	family of annihilating ideals.
	\section*{Use of generative AI}
	During the preparation of this manuscript, the author used Claude Sonnet 5
	(Anthropic) to check the correctness of the main counterexample, to identify
	errors in a proof and improve the clarity and language of the text. The author reviewed and edited all
	AI-assisted output and takes full responsibility for the content of the
	manuscript.

\end{document}